\newtheorem{theorem}{Theorem}
\newtheorem{proposition}{Proposition}
\newtheorem{remark}{Remark}
\newtheorem{lemma}{Lemma}
\newtheorem{corollary}{Corollary}
\begin{document}

\title{A mathematical justification for metronomic chemotherapy in oncology} 

\author{  Luis A. Fern\'andez $^\dagger$
  \and
  Cecilia Pola $^\dagger$
  \and
  Judith S\'ainz-Pardo $^\star$
}

\newcommand{\Addresses}{{% additional braces for segregating \footnotesize
  \bigskip
  \footnotesize

 $\dagger$ {Dep. Matem\'aticas, Estad\'{\i}stica y Computaci\'on, \par\nopagebreak Universidad de Cantabria,
Avda. de los Castros, s/n, 39005 Santander, Spain} \par

  \medskip

   $^\star$ {Instituto de F\'{\i}sica de Cantabria (UC-CSIC),
Avda. de los Castros, s/n, \par\nopagebreak 39005 Santander, Spain}\par\nopagebreak

 \medskip

\textit{E-mail address}, Luis A. Fern\'andez: \texttt{lafernandez@unican.es} \par\nopagebreak
\textit{E-mail address}, Cecilia Pola: \texttt{polac@unican.es}\par\nopagebreak
  \textit{E-mail address}, Judith S\'ainz-Pardo: \texttt{sainzpardo@ifca.unican.es}

}}

\maketitle

\Addresses

\begin{abstract}

We mathematically justify metronomic chemotherapy as the best strategy to apply most cytotoxic drugs in oncology for both curative and palliative approaches, assuming the classical pharmacokinetic model together with the Emax pharmacodynamic and the Norton-Simon hypothesis.

From the mathematical point of view, we will consider two mixed-integer nonlinear optimization problems, where the unknowns are the number of the doses and the quantity of each one, adjusting the administration times a posteriori.
\end{abstract}

\noindent{{\bf Mathematics Subject Classiﬁcation: }}{93C15, 92C50, 90C30}

\noindent{{\bf Keywords: }}{Metronomic chemotherapy, mixed-integer nonlinear optimization, Norton-Simon hypothesis, Emax model.}

\maketitle

\section*{Introduction}

Since the origins of chemotherapy in oncology, the standard and most widely used treatment has been the maximum tolerated dose (MTD) one: ``the higher the dose, the better",  whose rationale seems very intuitive. As everyone knows, the trade-off lies in the side effects due to drug toxicity, which not only diminishes quality of life for the patient (adding more illness to the existing one), but also make it necessary to impose rest periods
between cycles of therapy, which compromises a good resolution of the cancer treatment. For this reason, the design of more effective and less toxic chemotherapeutic regimens (by choosing the timing and fractionation of the doses) is a very active area of research in oncology.

A turning point in cancer chemotherapy treatments can be placed in the year $2000$, when a revolutionary alternative approach called ``metronomic chemotherapy" (MC) was proposed, see \cite{Scharovskyetal2009} and the references therein. According to National Cancer Institute \cite{NCI}, this is the ``treatment in which low doses of anticancer drugs are given on a continuous or frequent, regular schedule (such as daily or weekly), usually over a long time. Metronomic chemotherapy causes less severe side effects than standard chemotherapy. Giving low doses of chemotherapy may stop the growth of new blood vessels that tumors need to grow". In oncology, this last effect is referred to as ``anti-angiogenesis".

The historical controversy between efficacy in tumor killing and lack of toxicity has given rise to numerous studies and experimental trials comparing MTD and MC, the main conclusion of which can be summarized by saying that
``the body of experimental and clinical evidence, coupled with theoretical considerations [...] point to MC as a preferred course of action", see \cite{Karevaetal2015}.
Among the theoretical considerations, in addition to the anti-angiogenesis effect, we can mention that MC can activate anti-tumor immunity and minimize therapeutic resistance.

In this work, we will justify mathematically that MC is the best strategy in oncology for most cytotoxic drugs and both curative and palliative treatments, using simple well-established mathematical models and without taking these additional benefits into account. To that end, we will study two different optimization problems that arise when modeling  usual chemotherapy treatments for cancer:
to minimize the  tumor volume at a fixed final time (curative approach) or to minimize the total cumulative dose that a patient must take during a specific period of time for maintaining the tumor volume below a given threshold (palliative approach). We will assume a Gomperztian type tumor growth  and the Norton-Simon hypothesis to represent the growth-inhibitory influence due to the cytotoxic chemotherapy effect (see \cite{Norton-Simon1977, Simon-Norton2006}) with the classical pharmacokinetics and the Emax pharmacodynamic model. As far as we know, this is the first time that the superiority of MC is mathematically justified for  minimally parameterized models. Up to now, the efficiency of MC was related with heterogeneous tumor populations (sensitive/resistant cells) and palliative purposes, see \cite{Urszula2017} and the references therein.

From the mathematical point of view, the optimization problems under consideration are nonlinear and of mixed-integer type (where we will have the number of treatment doses as an integer variable, $N$, and the amount of each individual dose as a continuous variable, $d_i$). In both cases we will give explicit expressions of the approximated optimal solutions which will lead us to choose treatments corresponding to MC. A posteriori, the administration times can be adjusted (in fact, in several ways) completing the description of the treatment. We will illustrate the utility of our approach with numerical experiments concerning the treatment of some brain tumors  that exhibit Gompertzian growth (see \cite{Stensjoen-al2015}) using a cytotoxic drug called Temozolomide (TMZ). This drug is the subject of many clinical and biological recent studies (see \cite{BogdanskaPerezGarciaetal2017, FaivreBarbolosietal2013, Neyns_etal2010, Rossoetal2009} and their references).
The origin of this paper was an academic project \cite{TFG} submitted by the third author under the supervision of the other authors.

\section{Associated optimization problems}
\label{sec:AssociatedProblems}

The Gompertzian law has often been used to simulate the growth of some untreated tumors (see for instance \cite{Benzekry-etal2014}). In mathematical terms this can be described by the following ordinary differential equation (ODE):

\begin{equation}
L'(t) =\Psi(L(t)),
\label{GomperztPC}
\end{equation}
where
$$\Psi(L)= \xi L \log\left( \displaystyle\frac{\theta}{L} \right).$$

In the above equations, $L(t)$ represents the tumor volume (or tumor size)  at time $t$, $\xi$ its growth rate and $\theta$ the maximum size it can reach (also called carrying capacity for biological systems). In practice, the parameters $\xi$ and $\theta$ can also change with time (due to angiogenesis), but here we will focus on the simpler problem in which they are fixed.

As mentioned before, when using a cytotoxic drug to treat the tumor, we will consider the Norton-–Simon hypothesis to model its effect;
more precisely, we will assume that the growth-inhibiting effect due to the treatment is proportional to the growth rate of the untreated tumor
(see \cite{Norton-Simon1977, Simon-Norton2006}). Mathematically, this can be written as
\begin{equation}
 L'(t) =\Psi(L(t))(1-\rho(t)).
\label{NortonSimonPC}
\end{equation}
Here the term $\Psi(L(t))\rho(t)$ represents the growth-inhibitory influence due to the cytotoxic chemotherapy effect and it reflects both, the level of the therapy at time $t$, $\rho(t)$, and the tumor's sensitivity to therapy.

First let us present a  general result for the existence and uniqueness of solution for the Cauchy problem associated with (\ref{NortonSimonPC}) (see \cite{Fernandez-Pola2019} for the proof). As usual, we will denote by $W^{1,\infty}(0,T)$ the Sobolev space of all functions in $L^\infty(0,T)$ having first order weak derivative (in the distributional sense) also belonging to $L^\infty(0,T)$. It is well known that $W^{1,\infty}(0,T)$ can be identified with $C^{0,1}[0,T]$, the space of Lipschitz continuous functions in $[0,T]$, after a possible redefinition on a set of zero measure.

\begin{theorem}
Let us assume that $\xi$, $\theta$, $L_0$ and $T$ are given positive real numbers, $L_0 \in (0,\theta)$ and $\rho \in L^\infty(0,T)$. Then, there exists a unique solution $L \in W^{1,\infty}(0,T)$ of the following Cauchy problem
\begin{equation}
L'(t) =  \xi L(t) \log{\left(\frac{\theta}{L(t)}\right)} (1-\rho(t)), \ \ L(0) = L_0,
\label{GEG}
\end{equation}
given by
\begin{equation}
L(t) = \theta \exp{\left(\log{(L_0/\theta)}\exp{(-\xi \int_0^t (1-\rho(s))ds)} \right)}, \ \ \forall  t \in [0,T].
\label{E1}
\end{equation}
\label{T1}
\end{theorem}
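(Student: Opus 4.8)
The plan is to linearize the Gompertz-type equation \eqref{GEG} by the logarithmic substitution $u(t) = \log(L(t)/\theta)$, which converts the nonlinear Cauchy problem into a linear one whose explicit solution can be read off directly, and then to transport the existence and uniqueness statements back to $L$.

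\textbf{Step 1 (Existence by verification).} I would take the candidate $L$ given by formula \eqref{E1} and check it directly. Writing $g(t) = \log(L_0/\theta)\exp(-\xi\int_0^t(1-\rho(s))\,ds)$, one has $L = \theta e^{g}$. Since $\rho \in L^\infty(0,T)$, the map $t \mapsto \int_0^t (1-\rho(s))\,ds$ is Lipschitz, hence $g$ is Lipschitz (composition of the smooth exponential, restricted to a compact range, with a Lipschitz function), and so is $L = \theta e^{g}$; thus $L \in W^{1,\infty}(0,T)$. Differentiating a.e.\ gives $g'(t) = -\xi(1-\rho(t))g(t)$ and therefore $L'(t) = \theta e^{g}g' = \xi(\theta e^{g})(-g)(1-\rho) = \xi L \log(\theta/L)(1-\rho)$, because $-g = \log(\theta/L)$. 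Evaluating at $t=0$ yields $L(0) = \theta e^{\log(L_0/\theta)} = L_0$, so \eqref{E1} indeed solves \eqref{GEG}. I also note that $g(0) = \log(L_0/\theta) < 0$ while the exponential factor is positive and bounded on $[0,T]$, so $g$ stays negative and bounded; consequently $L(t) \in (0,\theta)$ for all $t$ and, in fact, $L$ remains in a compact subinterval of $(0,\theta)$ on which $\log$ is smooth.

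\textbf{Step 2 (Uniqueness via the linear equation).} Let $L \in W^{1,\infty}(0,T)$ be any solution. The first task is to show that $L(t)$ stays positive and bounded away from $\theta$, so that the substitution is legitimate; here I would exploit that $L \equiv \theta$ is the only constant solution and that the right-hand side tends to $0$ as $L \to 0^+$ (since $s\log(\theta/s) \to 0$), so both boundaries act as barriers that cannot be reached in finite time starting from $L_0 \in (0,\theta)$. Once $L$ is confined to a compact $[a,b] \subset (0,\theta)$, I define $u = \log(L/\theta) \in W^{1,\infty}(0,T)$; the chain rule for Lipschitz compositions gives $u' = L'/L = \xi \log(\theta/L)(1-\rho) = -\xi u(1-\rho)$ a.e., with $u(0) = \log(L_0/\theta)$. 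This is a linear homogeneous ODE with $L^\infty$ coefficient $-\xi(1-\rho)$, for which existence and uniqueness in $W^{1,\infty}$ is classical (integrating factor, or a Gronwall estimate applied to the difference of two solutions). Its unique solution is $u(t) = \log(L_0/\theta)\exp(-\xi\int_0^t(1-\rho(s))\,ds)$, and hence $L = \theta e^{u}$ is forced to coincide with \eqref{E1}.

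The main obstacle is the preliminary claim of Step 2, namely that every $W^{1,\infty}$ solution remains strictly inside $(0,\theta)$. The nonlinearity $s \mapsto \xi s \log(\theta/s)$ fails to be Lipschitz, and is not even defined, at $s=0$, so a standard Picard--Lindel\"of argument does not apply up to the boundary and the confinement must be established separately before the linearization can be invoked. Once positivity and the upper bound are secured, the rest is routine: the change of variables $u = \log(L/\theta)$ is a bijection between solutions of \eqref{GEG} lying in $(0,\theta)$ and solutions of the linear problem $u' = -\xi(1-\rho)u$, and uniqueness for the latter transfers immediately to the former.
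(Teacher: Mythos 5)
Your Step 1 (existence by direct verification of \eqref{E1}) is complete and correct. The genuine gap is in Step 2, and it is the one you yourself flag but never close: the confinement claim that every $W^{1,\infty}$ solution of \eqref{GEG} stays inside $(0,\theta)$ is not proved, and the heuristic you offer for it is invalid. That the right-hand side tends to $0$ as $L\to 0^{+}$ does \emph{not} make $L=0$ an unreachable barrier: the scalar ODE $y'=-\sqrt{y}$ has a right-hand side vanishing continuously at $0$, yet every solution starting from $y_0>0$ reaches $0$ in finite time. What protects the Gompertz nonlinearity is a quantitative fact, not mere vanishing: an Osgood-type condition ($\int_0^{\varepsilon}\frac{ds}{s\log(\theta/s)}=+\infty$) or, more simply, a bound extracted from the linearization itself. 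As written, the uniqueness half of the theorem is therefore not established. (For the record, the paper does not contain an internal proof to compare with --- it refers to \cite{Fernandez-Pola2019} --- but your substitution $u=\log(L/\theta)$ is certainly the natural route, and it is exactly the structure visible in formula \eqref{E1}.)

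The good news is that your plan closes with less work than you anticipate, because only positivity is needed to legitimize the substitution; the upper bound $L<\theta$ is not a prerequisite. Let $L\in W^{1,\infty}(0,T)$ be any solution and set $\tau=\sup\{t\in[0,T]:\ L>0 \mbox{ on } [0,t]\}$, which is positive since $L(0)=L_0>0$ and $L$ is continuous. On every compact subinterval of $[0,\tau)$ the function $u=\log(L/\theta)$ is Lipschitz and satisfies $u'=-\xi(1-\rho)u$ a.e., so Gronwall's inequality gives $|u(t)|\le |\log(L_0/\theta)|\,e^{\xi(1+\|\rho\|_{\infty})T}=:M$ for all $t<\tau$, i.e. $L\ge \theta e^{-M}>0$ on $[0,\tau)$. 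If $\tau<T$, continuity would give $L(\tau)\ge \theta e^{-M}>0$, hence $L>0$ on a neighbourhood of $\tau$, contradicting the definition of $\tau$; therefore $\tau=T$ and $L$ is globally bounded away from $0$. Now the linear problem $u'=-\xi(1-\rho)u$, $u(0)=\log(L_0/\theta)$, has the unique solution $u(t)=\log(L_0/\theta)\exp(-\xi\int_0^t(1-\rho(s))\,ds)$ (multiply by the integrating factor $\exp(\xi\int_0^t(1-\rho(s))\,ds)$ and use that a Lipschitz function with a.e.\ vanishing derivative is constant), which forces $L=\theta e^{u}$ to coincide with \eqref{E1}; since $u(0)<0$ and the exponential factor is positive, $u<0$ automatically, so $L<\theta$ comes out of the formula rather than having to be secured beforehand. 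With this confinement argument inserted your proof is complete; without it, the key step of the uniqueness argument is missing.
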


Depending on the expression of $\rho(t)$, different pharmacodynamics (PD)  can be studied. In this work  we consider the classic Emax model:
\begin{equation}\label{fEMAX}
\rho(t) =  \frac{k_{1}c(t)}{k_{2}+c(t)},
\end{equation}
     where  $k_{1}$ and $k_{2} $ are fixed positive real numbers  and $c(t)$ denotes the concentration of the drug in the tumor at time $t$. Other choices have been  considered in the literature, as the Skipper model where $\rho (t) =  k_{1}c(t)$. The Emax model seems to be more appropriate from the clinical point of view because it  saturates for high concentration values.

Another crucial term is related with the pharmacokinetics (PK). We use the following Cauchy problem to describe the PK of the drug:
     \begin{equation}
\left\{ \begin{array}{l}
c'(t) = -\lambda c(t) + \displaystyle\sum^{N}_{i=1}\sigma d_{i}\delta (t-t_{i}),\\
             c(0) = 0 .\\
             \end{array}
\right.
\label{pCauchy}
\end{equation}
The coefficient $\lambda$ is the clearance rate and it is related to the half-life of the drug. The second term on the right hand of the equation depends on both the specific drug and the way it is administered. We assume that $N$ doses, $\{d_i\}_{i=1}^N$, will be used at $N$ dosage times, $\{t_i\}_{i=1}^N $, such that $ 0 \leq t_1 < \ldots < t_{N}$.
The coefficient $\sigma$ is determined on the basis of drug-, patient- and tumor-specific parameters. For instance, in the case of brain tumors  the drug loss during the transport to the brain has to be taking into account, see  \cite{BogdanskaPerezGarciaetal2017} and the references therein. In other cases, $\displaystyle \sigma = \frac{\alpha}{V_{D}\beta}$, where $\alpha$ is the patient's body surface area, $V_{D}$ is the volume of distribution of the drug and  $\beta$ is the patient weight in $kg$ if $V_{D}$ is given in $l/kg$. Finally,  $\delta(t-t_{i})$ is the Dirac delta distribution concentrated at $t_i$.

In principle, it seems reasonable to consider the administration times $\{t_i\}_{i=1}^N$ as variables of the problem, as well as $N$ and the doses $\{d_i\}_{i=1}^N$. However, this approach would lead us into serious difficulties because the criterion for deciding a priori the characteristics that determine the feasible protocols may be unknown even to the specialist. In practice, this would make the mathematical approach unfeasible. Instead, in a first step, we can consider the administration times as data, fixed according to some pattern. It is clear that this assumption will implicitly affect the solution obtained: for example, a daily regimen will inevitably lead to a solution with small doses, because MTD cannot be administered over a long period due to the serious side effects it produces. Apparently, it looks impossible to get out of this vicious circle, but we will see that this can be done for most cytotoxic drugs by following our approach and readjusting the administration times a posteriori, once $N$ and the doses $\{d_i\}_{i=1}^N$ have been determined.

Now, let us present our optimization problems: in this paper we are interested in the solutions of two optimization problems associated with the Cauchy problem (\ref{GEG}) using the Emax model (\ref{fEMAX}) and when the following data are given: the final time $T$, $ T > t_{N}  $ and the levels of effectiveness and the toxicity of each individual dose,  $d_{min}$ and  $d_{max}$, respectively. We assume $0< d_{min}< d_{max}$. To simplify some expressions we will use $t_{N+1}=T$. As we have explained before, the administration times $t_i$, $i = 1,\ldots,N$, are fixed a priori following some pattern, but the number $N$ of doses of the treatment is a variable to be determined.

Firstly, we consider a curative approach with the goal of minimizing the tumor volume at the final time $T$ with a fixed cumulative dose, $D$, that will be divided into $N$ smaller doses, $\{d_i\}_{i=1}^N$, to be determined. Hence we formulate the following problem with $N+1$ variables
    \begin{equation}
(P_{1}) \left\{ \begin{array}{lcc}
             \min \hspace{0.2cm} L(T),\\
        N \in \mathbb{N}, d=(d_1,\ldots,d_N) \in \mathbb{R}^{N},  \\
              \mbox{subject to   }  \displaystyle\sum_{i=1}^{N}d_i  = D, \\
              \hspace{1.9cm} d_{min} \leq d_i \leq d_{max}, \hspace{0.5cm}  i= 1, \ldots , N.
             \end{array}
   \right.
 \end{equation}
 Using (\ref{E1}) and the fact that $L_{0}<\theta$ (where $L_0$ is the initial tumor size),  we transform $(P_{1})$ into the following problem:
    \begin{equation}
\left\{
  \begin{array}{lcc}
             \max \hspace{0.2cm} \displaystyle\int_{0}^{T} \rho(s)ds,\\
             N \in \mathbb{N}, d=(d_1,\ldots,d_N) \in \mathbb{R}^{N},  \\
              \mbox{subject to   }  \displaystyle\sum_{i=1}^{N}d_i  = D, \\
              \hspace{1.9cm} d_{min} \leq d_i \leq d_{max}, \hspace{0.5cm} i= 1,\ldots,N.
             \end{array}
   \right.
   \label{tildeP1}
 \end{equation}

 Now, let's formulate the problem related to a palliative approach in which we want to minimize the total dose (determining $N$ and $d_i$, for $i=1, \ldots, N$) while maintaining the tumor volume at $T$ below a given threshold $L_{*}>0 $ that is not harmful to the patient, allowing him to have an acceptable quality of life. We consider the same bound constraints on the individual doses as in the previous optimization problem:
    \begin{equation}
    (P_{2}) \left\{ \begin{array}{lcc}
             \min \hspace{0.2cm} \displaystyle\sum_{i=1}^{N} d_i, \\
              N \in \mathbb{N}, d=(d_1,\ldots,d_N) \in \mathbb{R}^{N}, \\
              \mbox{subject to   }  L(T) \leq L_{*},
              \mbox{ } \\
              \hspace{1.8cm} d_{min} \leq d_i \leq d_{max}, \hspace{0.5cm} i = 1, \ldots, N .
             \end{array}
   \right.
  % \label{p_minDosis}
   \end{equation}
 Using again  (\ref{E1}) it is easy to check the equivalence:
\begin{equation}
         L(T)\leq L_{*} \Longleftrightarrow
        \int_{0}^{T}\rho(s)ds \geq T + T_R, \mbox{with} \ \displaystyle T_R=\frac{1}{\xi}\log\left(\frac{\log\left(L_*/\theta\right)}
{\log\left(L_0/\theta\right)}\right).
        \label{cota}
\end{equation}
 Therefore,  we can reformulate the problem $(P_{2})$ as follows:
\begin{equation}
%(\tilde{P}_{2})
\left\{
\begin{array}{lcc}
             \min \hspace{0.2cm} \displaystyle\sum_{i=1}^{N}d_{i}, \\
             N \in \mathbb{N}, d=(d_1,\ldots,d_N) \in \mathbb{R}^{N}, \\
              \mbox{subject to   }  \displaystyle \int_{0}^{T} \rho(s)ds \geq T + T_R, \\
              \hspace{2cm} d_{min} \leq d_i \leq d_{max}, \hspace{0.5cm}  i = 1, \ldots, N .
             \end{array}
  \right.
   \label{tildeP2}
\end{equation}

\section{Optimal treatments}

In view of our optimization problems it is convenient to have an explicit formulation for the concentration of the drug.
\begin{proposition}
The solution of the Cauchy problem \eqref{pCauchy} is given by
\begin{equation}
        c(t)=\left\{ \begin{array}{ll}
            0 ,& \hspace{0.3cm} \mbox{if}\ \  t \in [0,t_{1}),\\
            \sigma e^{-\lambda t}e^{\lambda t_{1}}d_{1} , & \hspace{0.3cm} \mbox{if}\ \ t \in [t_{1},t_{2}),\\
            \sigma e^{-\lambda t}(e^{\lambda t_{1}}d_{1} + e^{\lambda t_{2}}d_{2}) , & \hspace{0.3cm} \mbox{if}\ \ t \in [t_{2},t_{3}),\\
            \vdots & \hspace{0.3cm} \vdots\\
            \sigma e^{-\lambda t}(e^{\lambda t_{1}}d_{1} + \ldots +e^{\lambda t_{N-1}}d_{N-1}) , & \hspace{0.3cm} \mbox{if}\ \ t \in [t_{N-1},t_{N}),\\
            \sigma e^{-\lambda t}(e^{\lambda t_{1}}d_{1} + \ldots +e^{\lambda t_{N}}d_{N}) , & \hspace{0.3cm} \mbox{if}\ \ t \in [t_{N},T).\\
        \end{array}
    \right.
    \label{c(t)}
    \end{equation}
\end{proposition}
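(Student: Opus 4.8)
The plan is to treat \eqref{pCauchy} as a linear first-order ODE with constant coefficient $\lambda$ and a measure-valued forcing term, and to integrate it by means of the classical integrating factor $e^{\lambda t}$. The solution $c$ will be a piecewise-smooth function that solves the homogeneous equation $c'=-\lambda c$ on each open interval between consecutive administration times and that jumps by $\sigma d_i$ at each $t_i$; the task is to turn this qualitative picture into the closed formula \eqref{c(t)}.

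First I would rewrite the equation as $c'(t)+\lambda c(t)=\sigma\sum_{i=1}^N d_i\,\delta(t-t_i)$ and multiply by $e^{\lambda t}$, so that the left-hand side becomes the exact derivative $(e^{\lambda t}c(t))'$. On the right-hand side I would invoke the sifting property of the Dirac delta, $e^{\lambda t}\delta(t-t_i)=e^{\lambda t_i}\delta(t-t_i)$, to obtain
\begin{equation}
\bigl(e^{\lambda t}c(t)\bigr)'=\sigma\sum_{i=1}^N d_i\,e^{\lambda t_i}\,\delta(t-t_i).
\end{equation}
Integrating from $0$ to $t$ and using $c(0)=0$ together with the fact that $\int_0^t\delta(s-t_i)\,ds=1$ exactly when $t_i\le t$, this yields the single expression
\begin{equation}
c(t)=\sigma e^{-\lambda t}\sum_{\{i\,:\,t_i\le t\}} d_i\,e^{\lambda t_i}.
\end{equation}

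It then remains to unfold this accumulated sum interval by interval: for $t\in[t_k,t_{k+1})$ the indices satisfying $t_i\le t$ are exactly $i=1,\dots,k$ (and none when $t\in[0,t_1)$), which reproduces each line of \eqref{c(t)}. As an independent check I would verify directly that the candidate function solves the problem: on each open interval $(t_k,t_{k+1})$ the sum is constant, so $c'=-\lambda c$ holds classically, while evaluating the two one-sided limits at $t_i$ gives the jump $c(t_i^+)-c(t_i^-)=\sigma d_i$, consistent with integrating the delta across $t_i$; this can be organized as a short induction on $k$.

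I expect the main obstacle to be conceptual rather than computational: making precise the notion of solution, since the forcing is a sum of Dirac masses and $c$ is genuinely discontinuous at the $t_i$, so $c'$ does not exist there in the classical sense. The delicate points are justifying the product rule and the identity $e^{\lambda t}\delta(t-t_i)=e^{\lambda t_i}\delta(t-t_i)$ at the level of distributions (or, equivalently, recasting the problem in its integral Duhamel formulation with the Green's function $e^{-\lambda(t-s)}H(t-s)$, where $H$ is the Heaviside function), and fixing the right-continuous convention that assigns the jump at $t_i$ to the value $c(t_i)$, which is precisely what makes the half-open intervals $[t_k,t_{k+1})$ in \eqref{c(t)} the correct ones.
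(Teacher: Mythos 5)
Your proof is correct, but it follows a genuinely different route from the paper's. The paper works in the frequency domain: it applies the Laplace transform to \eqref{pCauchy}, solves the resulting algebraic equation to get $\mathcal{L}(c)(s)=\sum_{i=1}^N \sigma d_i\,e^{-st_i}/(s+\lambda)$, and uses the shift theorem to recognize each summand as the transform of $g_i(t)=e^{-\lambda(t-t_i)}H(t-t_i)$ (with $H$ the Heaviside step), so that $c=\sigma\sum_{i=1}^N d_i g_i$, which is \eqref{c(t)}. You instead stay in the time domain: the integrating factor together with the sifting identity $e^{\lambda t}\delta(t-t_i)=e^{\lambda t_i}\delta(t-t_i)$ gives the accumulated sum $c(t)=\sigma e^{-\lambda t}\sum_{\{i:\,t_i\le t\}}d_i e^{\lambda t_i}$ directly, and you then confirm the candidate by checking the homogeneous decay $c'=-\lambda c$ on each open interval and the jumps $c(t_i^+)-c(t_i^-)=\sigma d_i$. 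The content is the same in both cases --- $c$ is a superposition of shifted decaying exponentials --- but the rigor is distributed differently: the Laplace route buries the delicate product-with-a-delta step inside transform calculus, where $\mathcal{L}(\delta(\cdot-t_i))(s)=e^{-st_i}$ is unambiguous, at the cost of invoking injectivity of the transform to recover $c$; your route must justify the distributional product and the value convention at the dose times, which you correctly identify as the real issue. Your final verification step is a genuine addition rather than redundancy: neither delta manipulation (yours or the paper's) by itself pins down the value of $c$ exactly at $t=t_i$, and indeed the paper's auxiliary function $g_i$ is defined ambiguously at $t=t_i$ (both of its branches claim that point, with values $0$ and $1$); declaring right-continuity, as you do, and checking that the resulting piecewise function satisfies \eqref{pCauchy} in the sense of distributions is the cleanest way to close either argument.
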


\begin{proof}
Applying Laplace transform to problem (\ref{pCauchy}) we get
$$ s{\cal L}(c)(s) = -\lambda {\cal L}(c)(s) + \sum^{N}_{i=1}\sigma d_{i}e^{-s t_i},$$
and consequently,
$$ {\cal L}(c)(s) = \sum^{N}_{i=1}\sigma d_{i}\frac{e^{-s t_i}}{s+\lambda} = \sum^{N}_{i=1}\sigma d_{i} e^{-s t_i}{\cal L}(e^{-\lambda t})(s)= \sum^{N}_{i=1}\sigma d_{i}{\cal L}(g_i)(s),$$
where
$$g_i(t) = \left\{ \begin{array}{ll}
            0 ,& \hspace{0.3cm} \mbox{if}\ \  0 \leq t \leq t_i,\\
            e^{-\lambda (t-t_i)} , & \hspace{0.3cm} \mbox{if}\ \ t \geq t_i.
            \end{array}
\right.$$
This implies that
$$ c(t) = \sum^{N}_{i=1}\sigma d_{i}g_i(t),$$
which is equivalent to (\ref{c(t)}).
\end{proof}

We continue by obtaining the expression for the integral term appearing in \eqref{tildeP1} and \eqref{tildeP2}. In this respect, we present the following result:

\begin{lemma}
\label{intrhoEmax}
Let us assume that $c(t)$ is given by \eqref{c(t)} and $\rho (t)$ by \eqref{fEMAX}. Then, it is verified that
\begin{equation}
\label{intrhoEm}
 \int_0^T \rho(s) \; ds =  \frac{k_{1} }{\lambda}
 \log \left( \frac{(d_1+\tilde{k}_{2} )  {\displaystyle\prod_{i=2}^{N}}
( {\displaystyle\sum_{j=1}^{i-1}} d_{j}e^{\lambda(t_{j}-t_{i})} + d_{i} + \tilde{k}_{2}) }{ {\displaystyle\prod_{i=1}^{N}} ({\displaystyle\sum_{j=1}^{i} } d_{j}e^{\lambda(t_{j}-t_{i+1})}+\tilde{k}_{2})}
    \right) ,
\end{equation}
where $\tilde{k}_{2}=k_{2}/\sigma$ and $t_{N+1}=T$.
\end{lemma}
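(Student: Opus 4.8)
The plan is to exploit the piecewise structure of $c(t)$ given in \eqref{c(t)} and reduce the computation to a sum of elementary integrals over the subintervals $[t_i,t_{i+1})$. First I would observe that $\rho\equiv 0$ on $[0,t_1)$ because $c\equiv 0$ there, so $\int_0^T \rho = \sum_{i=1}^N \int_{t_i}^{t_{i+1}} \rho$. Introducing the abbreviation $S_i=\sum_{j=1}^i e^{\lambda t_j}d_j$, on each interval $[t_i,t_{i+1})$ the concentration is $c(t)=\sigma e^{-\lambda t}S_i$; dividing the numerator and denominator of $\rho$ by $\sigma$ (which is where $\tilde k_2=k_2/\sigma$ enters) turns the integrand into $k_1 e^{-\lambda t}S_i/(\tilde k_2+e^{-\lambda t}S_i)$.

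Next, on each subinterval I would perform the substitution $u=e^{-\lambda t}S_i$, for which $du=-\lambda u\,dt$; this reduces the integrand to $-\tfrac{k_1}{\lambda}(\tilde k_2+u)^{-1}\,du$, so each subinterval integral is an exact logarithm. Evaluating at the endpoints yields $\int_{t_i}^{t_{i+1}}\rho=\tfrac{k_1}{\lambda}\bigl[\log(\tilde k_2+e^{-\lambda t_i}S_i)-\log(\tilde k_2+e^{-\lambda t_{i+1}}S_i)\bigr]$. I would then rewrite the two endpoint values in terms of the doses: since $e^{-\lambda t_i}S_i=\sum_{j=1}^{i-1}e^{\lambda(t_j-t_i)}d_j+d_i$ (the $j=i$ term contributing exactly $d_i$) and $e^{-\lambda t_{i+1}}S_i=\sum_{j=1}^{i}e^{\lambda(t_j-t_{i+1})}d_j$, these are precisely the factors appearing in the numerator and denominator of \eqref{intrhoEm}.

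Finally I would sum over $i=1,\ldots,N$ and collapse the sum of logarithms into a single logarithm of a ratio of products via $\sum\log a_i-\sum\log b_i=\log(\prod a_i/\prod b_i)$. The left-endpoint terms assemble into the numerator and the right-endpoint terms into the denominator; for $i=1$ the empty sum makes the first numerator factor simply $d_1+\tilde k_2$, recovering the stated expression with $t_{N+1}=T$.

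The computation is essentially routine once the substitution is made, so I do not expect a genuine analytic obstacle. The only place demanding care is the index bookkeeping: distinguishing the upper limits $i-1$ versus $i$ in the two sums, correctly splitting off the diagonal term $d_i$ at the lower endpoint, and noting that the contributions do \emph{not} telescope (because $S_i\neq S_{i+1}$) but rather accumulate into the full products. A careful verification that the $i=1$ factor and the final $i=N$ factor (using $t_{N+1}=T$) match the claimed form then completes the argument.
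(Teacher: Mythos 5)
Your proposal is correct and follows essentially the same route as the paper: both decompose $\int_0^T\rho$ over the subintervals $[t_i,t_{i+1})$ where $c$ has the closed form $\sigma e^{-\lambda t}S_i$, evaluate each piece as a difference of logarithms, and combine the sum into the single logarithm of a ratio of products. The only difference is cosmetic — you make the substitution $u=e^{-\lambda t}S_i$ explicit, whereas the paper simply states the resulting values of the elementary integrals.
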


\begin{proof}
 Using (\ref{c(t)}) and the definition of $\rho$, we derive \eqref{intrhoEm} from the following equalities:
\[    \int_{t_1}^{t_2} \displaystyle\frac{c(s)}{k_{2}+c(s)}ds    = \frac{1}{\lambda}\log\left(\frac{d_{1}+\tilde{k}_{2}}{d_{1}e^{\lambda(t_{1}-t_{2})}+\tilde{k}_{2}}\right),\]

\[ \int_{t_{i}}^{t_{i+1}}\displaystyle\frac{c(s)}{k_{2}+c(s)}ds = \frac{1}{\lambda}\log\left(\frac{d_{1}e^{\lambda(t_{1}-t_{i})}+\cdots+d_{i-1}e^{\lambda(t_{i-1}-t_{i})}+d_{i}+\tilde{k}_{2}}{d_{1}e^{\lambda(t_{1}-t_{i+1})}+\cdots+d_{i}e^{\lambda(t_{i}-t_{i+1})}+\tilde{k}_{2}}
\right) ,   \]
for $i=2, \ldots, N-1$ and taking into account that the integral corresponding to the interval $[t_{N},T]$ is similar to the previous expression changing $t_{i}$ by $t_{N}$ and  $t_{i+1}$ by $T$.
\end{proof}

\subsection{Curative approach}

 We start by considering the problem $(P_{1})$ defined in Section \ref{sec:AssociatedProblems} (in which we want to minimize the tumor volume at time $T$  with a fixed cumulative dose $D$). In this case, using \eqref{tildeP1}, \eqref{intrhoEm}, the  positivity  of $k_{1}/ \lambda $ and the monotonicity of the logarithm function, we reformulate the problem $(P_{1})$ as follows:

\begin{equation}
 \left\{ \begin{array}{lcc}
             Max \hspace{0.2cm} f_{1}(N,d) = {\displaystyle\frac{(d_1+\tilde{k}_2 )  {\displaystyle\prod_{i=2}^{N}}
( {\displaystyle\sum_{j=1}^{i-1}} d_{j}e^{\lambda(t_{j}-t_{i})} + d_{i} + \tilde{k}_{2}) }{ {\displaystyle\prod_{i=1}^{N}} ({\displaystyle\sum_{j=1}^{i} } d_{j}e^{\lambda(t_{j}-t_{i+1})}+\tilde{k}_{2})}}, \\
        N \in \mathbb{N}, d=(d_1,\ldots,d_N) \in \mathbb{R}^{N},  \\
              \mbox{subject to   }  \displaystyle\sum_{i=1}^{N}d_i  = D,\\
              \hspace{1.9cm} d_{min} \leq d_i \leq d_{max}, \hspace{0.5cm} i = 1, \hdots, N.
             \end{array}
   \right.
  \label{P1E}
 \end{equation}

Note that it is a mixed-integer nonlinear optimization problem with an integer variable, $N$, and $N$ continuous variables, $d_i$.

In order to simplify the objective function $f_1$, along the rest of the work, we will assume the {\bf main hypothesis} that we are considering cytotoxic drugs verifying
\begin{equation}
d_{max}e^{-\lambda s} \ll \tilde{k}_2,
\label{MH}
\end{equation}
where $s$ denotes the minimum of the elapsed time periods; this is
$$s=\min\{ t_{i+1}-t_i: i=1, \ldots, N\}.$$
Let us stress that this condition holds
(at least) when $ \lambda$ is sufficiently large (or equivalently, the half-life of the drug is sufficiently small) and that most of the usual cytotoxic drugs satisfy this condition, even when $s$ is small.

Assuming (\ref{MH}), the following approximation for the objective function holds:
\begin{equation}
\label{aproxbiglambda}
  f_{1}(N,d) \approx \displaystyle\prod_{i=1}^{N}  \left( \frac{d_{i}}{\tilde{k}_{2}}+1\right),
  \end{equation}
that leads to the mixed-integer nonlinear programming problem:
\begin{equation}
(\hat{P}_{1}) \left\{ \begin{array}{lcc}
             Max \hspace{0.2cm} \hat{f}_{1}(N,d) = \displaystyle\prod_{i=1}^{N}\left(\frac{d_{i}}{\tilde{k}_{2}}+1\right), \\
              N \in \mathbb{N}, d=(d_1,\ldots,d_N) \in \mathbb{R}^{N},  \\
              \mbox{subject to   }  \displaystyle\sum_{i=1}^{N}d_i  = D,\\
              \hspace{1.9cm} d_{min} \leq d_i \leq d_{max}, \hspace{0.5cm} i = 1, \hdots, N.
             \end{array}
   \right.
   \label{hatP1E}
 \end{equation}

At this point, it is very important to underline that the administration times ${\{t_i\}}_{i=1}^N$ do not appear in the formulation of $(\hat{P}_{1})$.
This is a crucial property that will allow us to break the vicious circle mentioned in the previous section:
for most cytotoxic drugs we will first determine $N$ and the doses ${\{d_i\}}_{i=1}^N$ as the solution of $(\hat{P}_{1})$
and then assign appropriate administration times, checking that (\ref{MH}) holds.

Using the bound constraints and the total dose value, in order to have a non-empty set of feasible points, we must assume
\begin{equation}
[D/ d_{max}, D/ d_{min}] \cap \mathbb{N} \neq \emptyset.
\label{H1}
\end{equation}

In particular, hypothesis (\ref{H1}) implies that only a finite number of values for variable $N$ have to be taken into account to solve the optimization problem. Namely,
\begin{equation}
\label{condNb}
 N \in \{\lceil D/ d_{max} \rceil,\ldots, \lfloor D/ d_{min} \rfloor\},
\end{equation}
where $\lfloor x \rfloor$ denotes the greatest integer less than or equal to $x$ and $\lceil x \rceil$ the least integer greater than or equal to $x$.

Now, for each fixed feasible value of $N$, we can formulate the following continuous nonlinear programming problem:
\begin{equation}
\label{hatP1EN}
(\hat{P}_{1}^N) \left\{ \begin{array}{lcc}
             Max \hspace{0.2cm} \hat{f}_{1}^N (d) =  \displaystyle\prod_{i=1}^{N}\left(\frac{d_{i}}{\tilde{k}_{2}}+1\right),\\
             d=(d_1,\ldots,d_N) \in \mathbb{R}^{N},  \\
              \mbox{subject to   }  \displaystyle\sum_{i=1}^{N}d_i = D,\\
              \hspace{1.9cm} d_{min} \leq d_i \leq d_{max}, \hspace{0.5cm} i = 1, \hdots, N.\\
             \end{array}
\right.
\end{equation}

It is not difficult to prove that equal-dosage is the optimal solution of the previous problem:

\begin{theorem}
Let us assume that $d_{min}>0$ and (\ref{H1}). Then, for each $N$  verifying \eqref{condNb},
the problem $(\hat{P}_{1}^N)$ has a unique optimal solution $\hat{d}$ given by
$$ \hat{d}_i= D/N,   \ \ \mbox{for} \ \ i=1, \ldots, N.$$
\label{teoremaMaxP1ENbigLambda}
\end{theorem}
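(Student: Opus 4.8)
I want to maximize
$\hat f_1^N(d)=\prod_{i=1}^N\bigl(\tfrac{d_i}{\tilde k_2}+1\bigr)$
subject to the linear constraint $\sum_{i=1}^N d_i=D$ and the box constraints $d_{min}\le d_i\le d_{max}$. Since the feasible set is a nonempty (by \eqref{H1}) compact convex polytope and the objective is continuous, a maximizer exists; the whole content of the theorem is that it is unique and equals the equal-dosage point. The natural strategy is to show that $\hat f_1^N$ is \emph{strictly concave} on the affine constraint set, because a strictly concave function on a convex set has at most one maximizer, and the symmetry of the problem then forces that maximizer to be $\hat d_i=D/N$.

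\begin{proof}[Proof sketch]
First I would note that the logarithm is strictly increasing, so maximizing $\hat f_1^N$ is equivalent to maximizing
$g(d)=\log \hat f_1^N(d)=\sum_{i=1}^N \log\bigl(\tfrac{d_i}{\tilde k_2}+1\bigr)$.
Each summand $d_i\mapsto \log(\tfrac{d_i}{\tilde k_2}+1)$ is strictly concave (its second derivative is $-\tfrac{1}{\tilde k_2^2}\bigl(\tfrac{d_i}{\tilde k_2}+1\bigr)^{-2}<0$), so $g$ is a sum of strictly concave functions of separate variables and is therefore strictly concave on $\mathbb R^N$. Restricted to the feasible polytope it attains a maximum at a unique point $\hat d$. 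To identify $\hat d$, I would invoke the symmetry of the problem: both the objective $g$ and the constraint set are invariant under any permutation of the coordinates $d_i$. Hence if $\hat d$ is the unique maximizer, applying a permutation $\pi$ gives another maximizer $\pi(\hat d)$, which by uniqueness must coincide with $\hat d$; therefore all components are equal, $\hat d_i=c$ for a common value $c$, and the constraint $\sum_i d_i=D$ forces $c=D/N$. Finally I would check feasibility: condition \eqref{condNb} guarantees $d_{min}\le D/N\le d_{max}$, so $\hat d_i=D/N$ indeed lies in the box.
\end{proof}

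The main obstacle is essentially bookkeeping rather than conceptual. The strict-concavity argument is clean, but one must be careful that strict concavity only gives \emph{uniqueness} once existence is secured, and that the symmetry argument is legitimate precisely because uniqueness has already been established. An alternative, slightly more computational route is to apply the Karush–Kuhn–Tucker conditions directly: introduce a multiplier $\mu$ for the equality constraint and multipliers for the box constraints, and show that at an interior-of-box solution the stationarity condition $\tfrac{1}{\tilde k_2}\bigl(\tfrac{d_i}{\tilde k_2}+1\bigr)^{-1}=\mu$ forces all $d_i$ equal; one then argues that no boundary configuration can beat the equal-dosage point, again using the concavity (an AM–GM-type inequality on the factors $\tfrac{d_i}{\tilde k_2}+1$ with fixed sum $\sum d_i=D$ yields the same conclusion directly). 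I would present the concavity-plus-symmetry argument as the cleanest, relegating the AM–GM observation to a remark, and I expect the only delicate point to be verifying that $D/N$ falls within $[d_{min},d_{max}]$ under \eqref{condNb}, which is immediate from the definition of the admissible range for $N$.
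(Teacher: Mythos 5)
Your proof is correct, but it takes a genuinely different route from the paper. The paper's argument is a two-line application of the AM--GM inequality: for any feasible $d$,
\[
\sqrt[N]{\prod_{i=1}^{N}\Bigl(\tfrac{d_{i}}{\tilde{k}_{2}}+1\Bigr)} \;\le\; \frac{1}{N}\sum_{i=1}^{N}\Bigl(\tfrac{d_{i}}{\tilde{k}_{2}}+1\Bigr) \;=\; \frac{D}{N\tilde{k}_{2}}+1,
\]
so $\hat f_1^N(d)\le\bigl(\tfrac{D}{N\tilde k_2}+1\bigr)^N$, and the equal-dose vector (feasible by \eqref{condNb}) attains this bound; uniqueness is the equality case of AM--GM. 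This is exactly the ``alternative'' you relegated to a remark. Your main argument instead establishes existence by compactness, uniqueness by strict concavity of $g=\log\hat f_1^N$ (a sum of strictly concave univariate functions), and then identifies the maximizer by permutation symmetry, with no computation of the optimal value at all. Each approach buys something: the paper's is shorter, fully elementary, and delivers the explicit optimal value $\bigl(\tfrac{D}{N\tilde k_2}+1\bigr)^N$, which is then used in the proof of Corollary~\ref{coroteoremaMaxP1EN}; yours is more robust and general, applying verbatim to any permutation-invariant problem with a separable log-concave objective, without having to guess the optimizer in advance. One point of care: your plan asserts that $\hat f_1^N$ itself is strictly concave on the affine constraint set, and this is false in general (already for $N=2$ the Hessian of $(1+d_1)(1+d_2)$ is indefinite, and restriction to the hyperplane does not rescue concavity for all $N$ and all box sizes); what is true, and what your actual proof correctly uses, is that the \emph{logarithm} of the objective is strictly concave, which suffices because the maximizer sets of $\hat f_1^N$ and $\log\hat f_1^N$ coincide on the feasible set where $\hat f_1^N\ge 1>0$. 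So the proof stands; only that phrase in the plan should be corrected.
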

\begin{proof}
For any feasible point $d$, using the relation between the geometric and arithmetic means, we get
$$\sqrt[N]{\prod_{i=1}^{N}\left(\frac{d_{i}}{\tilde{k}_{2}}+1\right)} \leq \frac{1}{N}{\displaystyle \sum_{i=1}^{N}\left(\frac{d_{i}}{\tilde{k}_{2}}+1\right) = \frac{D}{N\tilde{k}_2}+1 },$$
and therefore,   $\hat{f}_{1}^N (d)\leq \left(\frac{D}{N\tilde{k}_{2}}+1\right)^{N}$. By the hypotheses, $\hat{d} = (D/N,\ldots,D/N)$ is a feasible point for $(\hat{P}_{1}^N)$ such that the
corresponding objective function value reaches the above upper bound, so the conclusion of the theorem follows straightforwardly.
\end{proof}

As a consequence,  the optimal solution of the approximate mixed-integer optimization problem $(\hat{P}_{1})$ leads to the longest feasible treatment:
\begin{corollary}
Let us assume that $d_{min}>0$ and (\ref{H1}). Then the optimal solution of $(\hat{P}_{1})$ is given by  $(\hat{N},\hat{d})$, with  $\hat{N}= \lfloor D/ d_{min} \rfloor$ and
$\hat{d}_i=D/\hat{N},$ for $i=1, \ldots,\hat{N}.$
\label{coroteoremaMaxP1EN}
\end{corollary}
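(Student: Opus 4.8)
The plan is to leverage Theorem \ref{teoremaMaxP1ENbigLambda}, which already tells us that for each fixed feasible $N$, the optimal value of $(\hat{P}_1^N)$ is exactly $\left(\frac{D}{N\tilde{k}_2}+1\right)^N$, attained by equal dosing. Since hypothesis (\ref{H1}) guarantees that the integer variable $N$ ranges over the finite set in (\ref{condNb}), solving $(\hat{P}_1)$ reduces to maximizing the single-variable function
\begin{equation*}
\varphi(N) = \left(\frac{D}{N\tilde{k}_2}+1\right)^N
\end{equation*}
over $N \in \{\lceil D/d_{max}\rceil,\ldots,\lfloor D/d_{min}\rfloor\}$. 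So the entire corollary comes down to showing that $\varphi$ is increasing in $N$, which would force the maximum to occur at the right endpoint $\hat{N}=\lfloor D/d_{min}\rfloor$.

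First I would treat $\varphi$ as a function of a continuous real variable $x>0$ by writing $\varphi(x)=\exp\!\left(x\log\!\left(1+\frac{D}{\tilde{k}_2 x}\right)\right)$ and show the exponent $h(x)=x\log\!\left(1+\frac{a}{x}\right)$ is increasing, where $a=D/\tilde{k}_2>0$. Differentiating gives
\begin{equation*}
h'(x) = \log\!\left(1+\frac{a}{x}\right) - \frac{a}{x+a},
\end{equation*}
so the task is to verify $h'(x)>0$ for all $x>0$. This is a standard one-variable inequality: substituting $u=a/x>0$, it is equivalent to $\log(1+u) > \frac{u}{1+u}$, a well-known elementary bound that follows, for instance, from the fact that both sides vanish at $u=0$ and the derivative of the left side, $\frac{1}{1+u}$, exceeds that of the right side, $\frac{1}{(1+u)^2}$, for $u>0$. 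Hence $h$ is strictly increasing, so $\varphi$ is strictly increasing, and the discrete maximum is attained at the largest admissible integer $\hat{N}=\lfloor D/d_{min}\rfloor$.

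Finally I would assemble the pieces: once $\hat{N}$ is identified as the optimizer of $N$, Theorem \ref{teoremaMaxP1ENbigLambda} applied with $N=\hat{N}$ gives the unique optimal doses $\hat{d}_i=D/\hat{N}$ for $i=1,\ldots,\hat{N}$, completing the description of the optimal pair $(\hat{N},\hat{d})$. I would also note briefly that $\hat{N}$ is feasible by construction of (\ref{condNb}), so the candidate is genuinely attainable.

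I expect the only real obstacle to be the monotonicity of $\varphi$, i.e.\ establishing $\log(1+u)>\frac{u}{1+u}$ cleanly; everything else is bookkeeping. One subtlety worth checking is that this is exactly the classical fact that $\left(1+\frac{a}{x}\right)^x$ increases toward $e^a$, so an alternative to the direct derivative computation is to cite that monotonicity and avoid the calculus entirely. Either route is short, and the interpretation — that the longest feasible treatment with the smallest per-dose amount is optimal, which is precisely the metronomic prescription — follows immediately.
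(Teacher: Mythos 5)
Your proposal is correct and follows essentially the same route as the paper: both reduce $(\hat{P}_1)$ via Theorem \ref{teoremaMaxP1ENbigLambda} to maximizing $\left(\frac{D}{N\tilde{k}_2}+1\right)^N$ over the finite set \eqref{condNb} and then invoke monotonicity in $N$. The only (cosmetic) difference is that you establish the monotonicity by differentiating $x\log\left(1+\frac{a}{x}\right)$ and using $\log(1+u)>\frac{u}{1+u}$, whereas the paper rewrites the value as $\left(\varphi_1\left(\frac{N\tilde{k}_2}{D}\right)\right)^{D/\tilde{k}_2}$ and cites the classical fact that $\varphi_1(x)=\left(\frac{1}{x}+1\right)^x$ is strictly increasing --- exactly the alternative you mention at the end.
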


\begin{proof}
This can be deduced combining Theorem \ref{teoremaMaxP1ENbigLambda} with the fact that the auxiliary function
$$\varphi_1(x)=\left(\frac{1}{x}+1\right)^{x}$$
is strictly increasing in $(0,+\infty)$ and noticing that
$$\hat{f}_1(N,\hat{d}) = \left(\frac{D}{N\tilde{k}_{2}}+1\right)^N = \left(\varphi_1\left(\frac{N\tilde{k}_{2}}{D}\right)\right)^{\frac{D}{\tilde{k}_{2}}}.$$
\end{proof}

Furthermore, we can   study  the dependence of the objective function  optimal value with respect to $d_{min}$. Using previous expressions, we derive
$$   \hat{f}_{1}(\hat{N},\hat{d}) = \left(\frac{D}{\hat{N}\tilde{k}_{2}}+1\right)^{\hat{N}} \approx \left(\frac{d_{min}}{\tilde{k}_{2}}+1\right)^\frac{D}{d_{min}} =
\left(\varphi_1\left(\frac{\tilde{k}_{2}}{d_{min}}\right)\right)^{\frac{D}{\tilde{k}_{2}}}.$$
On the one hand, this implies that to increase $\hat{f}_{1}(\hat{N},\hat{d})$ (and therefore to decrease the tumor size at the fixed final time) is interesting to take $d_{min}$ as small as possible, in line with the principles of MC.
But, on the other hand, from the clinical point of view, it is known that for any drug there is a minimum effective dose (MED) (the smallest dose with a discernible useful effect) that must be taken into account. In general, the estimation of the MED (and even its definition) remains controversial and this carries over to the determination of the optimal $d_{min}$ in the practice.

\begin{remark}
\begin{itemize}
\item [i)] Choosing the Gompertzian law for the untreated tumor growth is not a crucial aspect for deriving our results.
Other typical ODEs with sigmoid-type solutions may be reasonable choices, such as (\ref{GomperztPC}) with
$$\Psi(L)= \frac{\xi}{\gamma} L\left(1-\displaystyle{\left(\frac{L}{\theta}\right)^\gamma}\right),$$
with $\gamma >0$.  For $\gamma = 1$, we recover the well-known logistic ODE, meanwhile for small $\gamma >0$ ,
this is an approximation of the Gompertz ODE, see \cite{Benzekry-etal2014}. All these options lead to the same results.

\item [ii)] Instead, it can be shown that the Norton-Simon hypothesis is crucial for our argument.
There are other classical hypotheses such as the log-kill one. It states that a given dose of chemotherapy
kills the same fraction of tumor cells regardless of the size of the tumor at the time of treatment, which leads
to the following problem
\begin{equation}
L'(t) =  \xi L(t) \log{\left(\frac{\theta}{L(t)}\right)} -\rho(t)L(t), \ \ L(0) = L_0,
%\label{GLK}
\end{equation}
instead of (\ref{GEG}). Although the log-kill hypothesis continues to appear frequently in the literature,
it was shown in \cite{Norton-Simon1977} to be inconsistent with some clinical experiences.

\item [iii)] We have studied a related problem for continuous (non-discrete) drug infusion in \cite{Fernandez-Pola2019} (labeled $(OP_2)$ with $G = G_2$).
There, it was proved the appearance of a constant maintenance infusion rate (during a fairly long time interval) in the expression of the optimal control.
This can be seen as a piecewise continuous version of our previous Corollary \ref{coroteoremaMaxP1EN}.

\end{itemize}
%\label{R1}
\end{remark}

From a theoretical point of view, we have seen that the optimal curative treatment (i.e. aiming to minimize the tumor size at a given time with a fixed cumulative dose using drugs for which \eqref{aproxbiglambda} holds) is the longest feasible treatment with equal individual doses. This is the case for drugs with a short half-life such as Temozolomide (TMZ), an oral alkylating agent with antitumor activity in high and low grade gliomas (HGG and LGG, respectively). In the monotherapy phase for adult patients with newly-diagnosed glioblastoma multiforme, TMZ is usually administered consecutively for $5$ days of every $28$ days (this is denoted by  $5/28d$) for a maximum of $6$ cycles  with individual doses of $150$ $mg/m^2$ in first cycle  and $200$ $mg/m^2$ in the others.  So, taking into account this usual treatment (UT),  for our numerical experiments we have  fixed $T=210$ days for the final time and $D=5{,}750$ $mg/m^2$ for the cumulative dose.

 The parameter values used for our numerical computations are showed in Table \ref{ParameterValues}.
 Following the reference cited in \cite{BogdanskaPerezGarciaetal2017} we know that TMZ  has a mean elimination half-life  of $1.8$ hours (i.e. $t_{1/2}= 0.075$ days).
 The clearance rate $\lambda$ is estimated using that $\lambda= log(2)/t_{1/2} = 9.242$ $days^{-1}$. The growth rate $\xi$ depends on the type of tumor; the value $\xi = 5.51e$$-3$ could correspond to an HGG with  a volume-doubling time about $126$ days (see \cite{Stensjoen-al2015} for growth dynamics of glioblastomas). Following  \cite{BogdanskaPerezGarciaetal2017}, we have used $\sigma=1.6(2.5e$$-3) \approx 4e$$-3$ that corresponds to a woman  with a body surface  of $1.6 \; m^2$ and it takes into account the fraction of TMZ getting to her brain interstitium. On the other hand, it is well known that $k_1$ represents the maximum effect of the drug on the tumor and $k_2$ is the effective concentration, i.e  concentration producing $50\%$ of the maximum effect.
We have chosen $k_2 = 0.36 \ mg/l$, a value that is consistent with predicted peak concentrations of TMZ from  \cite{BogdanskaPerezGarciaetal2017, Rossoetal2009}. Moreover, we have taken $k_1=60$ for illustrative purposes.
Let us note that the parameter $k_1$ does not play any role either in the
optimization process of this subsection nor in the main hypothesis (\ref{MH}), but it does influence the evaluation of the tumor size. Let us stress that (\ref{MH}) holds in this case, because
$$d_{max}e^{-\lambda s} \approx 0.01938 \ll \tilde{k}_2 = 90,$$
with $s=1$.
Finally, let us underline that the value of $\theta$ is unnecessary here, because the initial datum and the results are given rescaled in the form $L_0/\theta$ and $L(t)/\theta$, resp. (see Table \ref{ParameterValues} and Figure~\ref{fig:1E}).

\begin{table}
% table caption is above the table
\caption{Parameter values used for the numerical experiments.}
\label{ParameterValues}
\begin{tabular}{l|c}
\hline\noalign{\smallskip}
 Parameters &    Temozolomide (TMZ)  \\
\noalign{\smallskip}\hline\noalign{\smallskip}
$\lambda$  ($days^{-1}$)  &  9.242   \\
$\xi$   ($days^{-1}$)    &  5.51e$-3$   \\
$k_{1}$                  & 60       \\
$k_{2}$  ($mg/l$)        &  3.6e$-1$        \\
$\sigma$  ($m^2/l$)      & 4e$-3$     \\
$d_{max}$  ($mg/m^2$)    & 200          \\
$T$ ($days$)             &  210          \\
$L_0/\theta$             & 2.5e$-1$      \\
$t_1$                    &  0             \\
\hline
\end{tabular}
\end{table}

 In  Table~\ref{1ETMZ} we present some numerical results to illustrate and compare the optimal treatments corresponding to $(P_{1})$ without using the approximation \eqref{aproxbiglambda} (i.e. (\ref{P1E})) with those associated to $(\hat{P}_{1})$ (given by  Theorem~\ref{teoremaMaxP1ENbigLambda} and Corollary~\ref{coroteoremaMaxP1EN}) in the case of individual doses between $d_{min} = 100$ $mg/m^2$ and $d_{max} = 200$ $mg/m^2$. To solve the problems associated with $(P_{1})$ when the value for $N$ is fixed, we have used FMINCON (a nonlinear programming solver provided in MATLAB's Optimization Toolbox) with the SQP algorithm and the value $1.e$$-8$ for the optimality tolerance.
 Here, we present results only for the feasible  values of $N$ taking into account the length of the treatment interval and the number of dosage times for the 5/28d schedule: $N \in [29,40]$.
 This number $N$ appears in the first column; then we present two columns with the individual doses of the treatment described in Theorem \ref{teoremaMaxP1ENbigLambda} and the ratio of the final tumor volume to the initial  volume, $L(T)/L_0$, for an initial medium size tumor ($25\%$ of carrying capacity); the next three columns report results given by FMINCON: the  minimum and maximum values reached by the individual doses of each treatment, $\bar{d}_{min}$ and  $\bar{d}_{max}$, and its corresponding ratio $L(T)/L_0$. Let us note at this point that, for each value of $N$,  the individual doses obtained with FMINCON are almost identical and the optimal solution is reached administering $40$ doses  (the longest 5/28d treatment, as Theorem~\ref{teoremaMaxP1ENbigLambda} and Corollary~\ref{coroteoremaMaxP1EN} state, using the approximation \eqref{aproxbiglambda}). The last line of the table is associated to the usual treatment (UT) described above using $N=30$ doses. It is noteworthy that with it the final tumor volume is $91\%$ of the initial volume while our optimal treatment provides $73\%$. At the top of Figure~\ref{fig:1E} we see the evolution of the tumor volume, $L(t)/\theta$, for a selection of treatments of Table~\ref{1ETMZ}.

\begin{table}
% table caption is above the table
\caption{Results  with $D=5{,}750$ $mg/m^2$ and $d_{min} = 100$ $mg/m^2$ with 5/28d schedule.}
\label{1ETMZ}
\begin{tabular}{l|cc||ccc}
\hline\noalign{\smallskip}
 & \multicolumn{2}{|c||}{$(\hat{P}_{1})$} &  \multicolumn{3}{|c}{$(P_{1})$} \\
   \noalign{\smallskip}
\hline\noalign{\smallskip}
 $N$  & $D/N$ &  $L(T)/L_0$  &   $\bar{d}_{min}$ & $\bar{d}_{max}$ &    $L(T)/L_0$  \\
\hline
  $  29 $ & $ 198.28 $ & $ 0.93 $ & $ 198.26 $ & $ 198.32 $ & $ 0.93 $
\\
 $  30 $ & $ 191.67 $ & $ 0.91 $ & $ 191.65 $ & $ 191.71 $ & $ 0.91 $
\\
$  31 $ & $ 185.48 $ & $ 0.89 $ & $ 185.47 $ & $ 185.54 $ & $ 0.89 $
\\
$  32 $ & $ 179.69 $ & $ 0.87 $ & $ 179.67 $ & $ 179.72 $ & $ 0.87 $
\\
 $  33 $ & $ 174.24 $ & $ 0.85 $ & $ 174.23 $ & $ 174.28 $ & $ 0.85 $
\\
$  34 $ & $ 169.12 $ & $ 0.83 $ & $ 169.10 $ & $ 169.15 $ & $ 0.83 $
\\
$  35 $ & $ 164.29 $ & $ 0.81 $ & $ 164.27 $ & $ 164.32 $ & $ 0.81 $
\\
 $  36 $ & $ 159.72 $ & $ 0.79 $ & $ 159.71 $ & $ 159.77 $ & $ 0.79 $
 \\
 $  37 $ & $ 155.41 $ & $ 0.77 $ & $ 155.39 $ & $ 155.43 $ & $ 0.77 $
 \\
 $  38 $ & $ 151.32 $ & $ 0.76 $ & $ 151.30 $ & $ 151.34 $ & $ 0.76 $
\\
 $  39 $ & $ 147.44 $ & $ 0.74 $ & $ 147.42 $ & $ 147.46 $ & $ 0.74 $
\\
 $  \mathbf{40} $ & $ 143.75$ & $ \mathbf{0.73} $ & $ \mathbf{143.74} $ & $ \mathbf{143.78} $ & $ \mathbf{0.73} $ \\ \hline
 30 (UT) &    &  & $ 150 $ &   $ 200 $ &  $ 0.91 $
\\
\noalign{\smallskip}\hline
\end{tabular}
\end{table}

\begin{figure}[htp]
\begin{center}
  \includegraphics[width=4.3in]{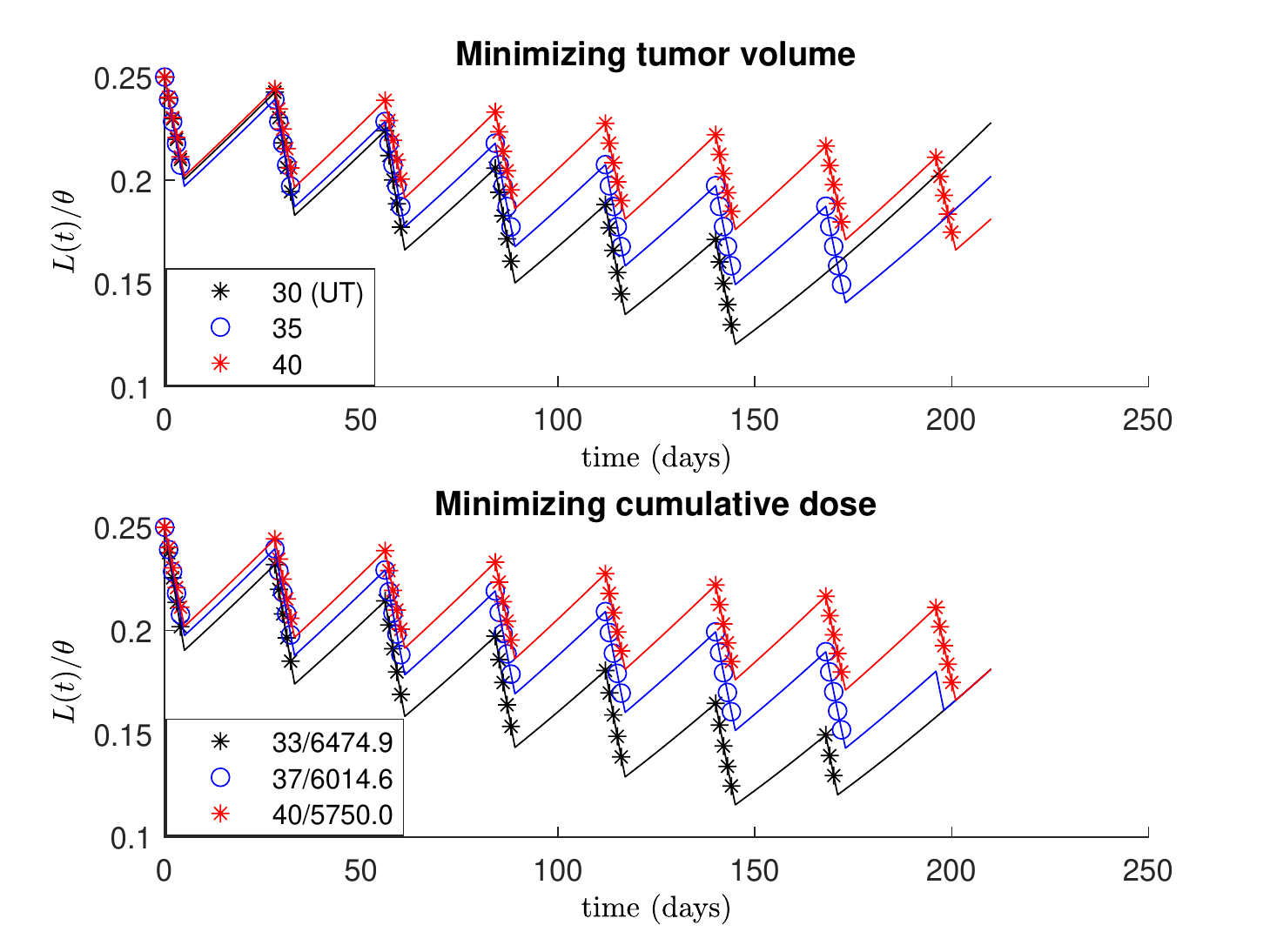}\\
   \caption{Optimization results for TMZ (see Tables~\ref{1ETMZ} and \ref{2ETMZ}).}
   \label{fig:1E}
  \end{center}
\end{figure}

 The most relevant results of this section are given in  Table~\ref{1EdminTMZ} and at the top of Figure~\ref{fig:1Edmin}, where we illustrate the influence of the level $d_{min}$ and compare several optimal treatments corresponding to $(P_{1})$ requiring the same cumulative dose ($D=5{,}750$  $mg/m^2$). The rest of the parameters are given in Table \ref{ParameterValues}. As we have mentioned previously, first we have fixed $d_{min}$ and solved $(\hat{P}_{1})$; once we have determined the corresponding solution ($\hat{N}$ and $\{\hat{d}_i\}_{i=1}^{\hat{N}}$) we have selected the most appropriate schedules for TMZ used in some studies, see  \cite{Neyns_etal2010}: the 7/14d with doses of $150$ $mg/m^2$, the 21/28d with doses of $100$ $mg/m^2$ and $75$ $mg/m^2$ and the daily regimen with $50$ $mg/m^2$ doses. The results of the penultimate column of Table~\ref{1EdminTMZ} have been calculated with the times given by these schedules. Let us underline that the specific values of the administration times do not affect the optimal solution and the only relevant factor is the maximum number of doses that can be administered with the chosen $d_{min}$. We observe that the greatest tumor shrinkage is achieved with the longest treatment which also has the lowest doses, in line with MC.  Moreover, three of the treatments of  Table~\ref{1EdminTMZ}  are more effective than the optimal treatment of  Table~\ref{1ETMZ}, all of them improving the usual treatment UT.

 As predicted by the Norton-Simon hypothesis \cite{Simon-Norton2006}, it is apparent that the optimal solution is more ``dose-dense" than other feasible protocols, see the top of Figure \ref{fig:1Edmin} (the period between cycles is shorter). Nevertheless, concerning the ``dose-intensity" (defined as the total dose administered during a treatment, divided by its duration) it can be seen in the last column of  Table \ref{1EdminTMZ} that the optimal protocol has the lowest one.

\begin{table}
\caption{Numerical results for $(\hat{P}_{1})$ with $D=5{,}750$ $mg/m^2$ and varying $d_{min}$.}
\label{1EdminTMZ}
\begin{tabular}{l|cc|lcc}
\hline\noalign{\smallskip}
$d_{min}$  &  $\hat{N}$  & $D/\hat{N}$ & Schedule &  $L(T)/L_0$ & Dose intensity \\
\noalign{\smallskip}\hline\noalign{\smallskip}
$150$   &  $  38 $ & $ 151.32 $ &  7/14d & $ 0.76 $ & 78.77 \\
$100$   &  $  57 $ & $ 100.88 $ & 21/28d & $ 0.53 $ & 80.99 \\
$75 $   &  $  76 $ & $  75.66 $ & 21/28d & $ 0.41 $ & 59.28 \\
$50 $   &  $ 115 $ & $  50.00 $ & 28/28d& $ 0.27 $  & 50.00  \\
\hline
\end{tabular}
\end{table}

\begin{figure}[htp]
\begin{center}
 \includegraphics[width=4.3in]{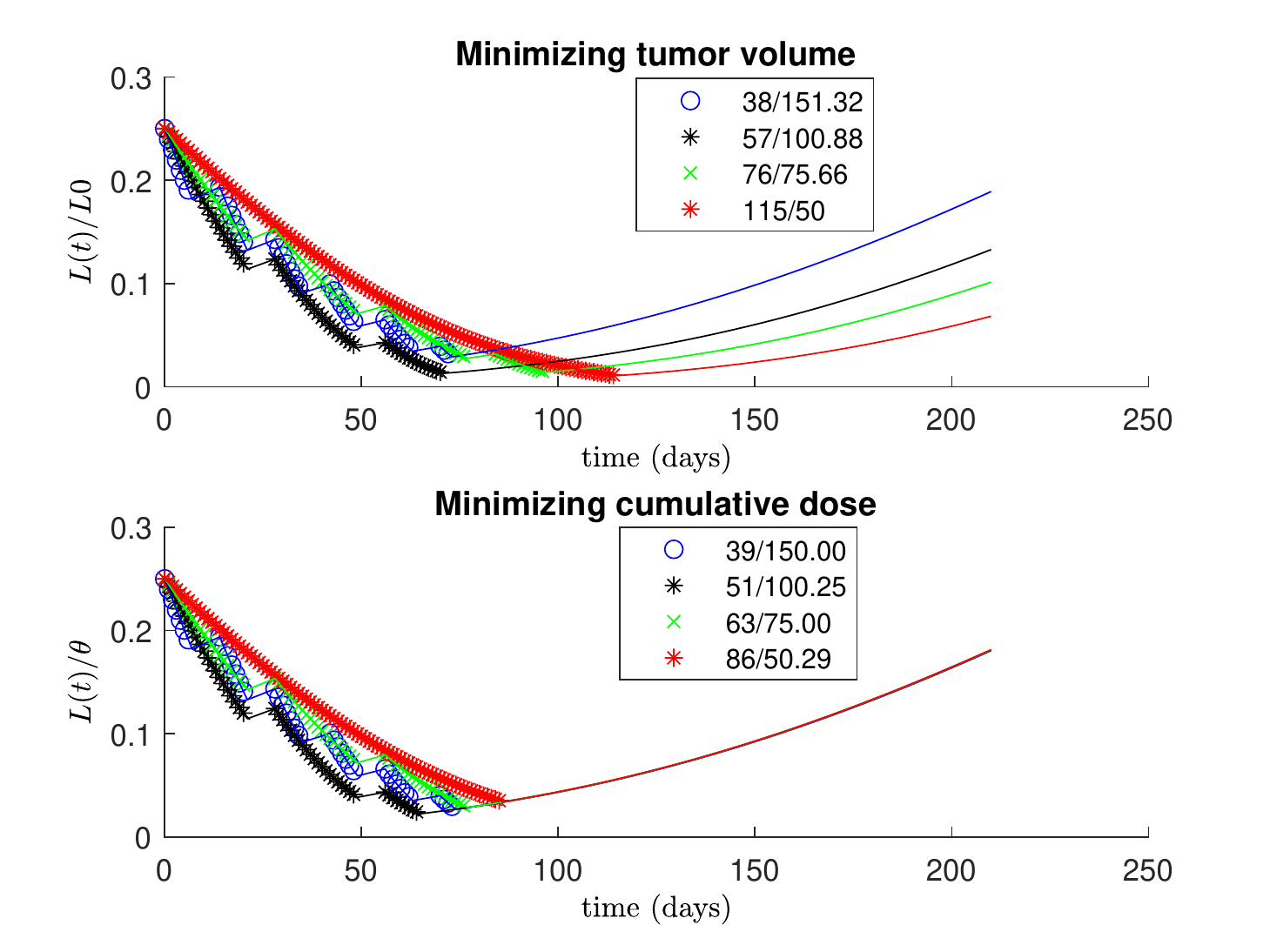}\\
  \caption{Optimization results with TMZ (see Tables~\ref{1EdminTMZ} and \ref{2EdminTMZ}).}
   \label{fig:1Edmin}
  \end{center}
\end{figure}

\subsection{Palliative approach}

In this section we are interested in the optimization problem $(P_{2})$ i.e. we want to minimize the total administered dose with a constraint on the tumor size at the final time: $L(T) \leq L_{*}$, where $L_{*}$ is a given level beforehand. Using (\ref{cota}) and Lemma~\ref{intrhoEmax}, that constraint is equivalent to the following inequality:
$$ f_{1}(N,d) \geq e^{\tilde{T}_R},  $$
being $f_{1}$ the objective function of problem \eqref{P1E}  and
\begin{equation}
 \tilde{T}_R \stackrel{def}{=} \frac{\lambda}{k_{1}}(T+T_R).
\label{cotaEmax}
\end{equation}

Let us remember that  $T_R$  depends on the data $\xi$, $\theta$, $L_{0}$ and $L_{*}$ (see \eqref{cota}).
Therefore, using \eqref{tildeP2}, the optimization problem $(P_{2})$  can be reformulated as follows:

\begin{equation}
 \left\{ \begin{array}{lcc}
             \min \hspace{0.2cm} f_2(N,d) = \displaystyle\sum_{i=1}^{N} d_i, \\
        N \in \mathbb{N}, d=(d_1,\ldots,d_N) \in \mathbb{R}^{N},  \\
              \mbox{subject to   }  f_1(N,d) \geq e^{\tilde{T}_R} ,\\
              \hspace{1.9cm} d_{min} \leq d_i \leq d_{max}, \hspace{0.5cm} i = 1, \hdots, N,
             \end{array}
   \right.
 %  \label{**}
 \end{equation}
with $d_{min} > 0$ and $\tilde{T}_R \in \mathbb{R}$ defined  in (\ref{cotaEmax}). Using the approximation \eqref{aproxbiglambda} in the nonlinear constraint we arrive to the following nonlinear programming problem
\begin{equation}
%\label{P2Eapprox}
(\hat{P}_{2}) \left\{ \begin{array}{lcc}
             \min \hspace{0.2cm} f_2(N,d) = \displaystyle\sum_{i=1}^{N} d_i,  \\
        N \in \mathbb{N}, d=(d_1,\ldots,d_N)  \in \mathbb{R}^{N},  \\
              \mbox{subject to   }  \hat{f}_{1} (N,d) \geq e^{\tilde{T}_R},\\
              \hspace{1.9cm} d_{min} \leq d_i \leq d_{max}, \hspace{0.5cm} i = 1, \hdots, N,
             \end{array}
   \right.
 \end{equation}
with $\hat{f}_{1}$ defined in (\ref{hatP1E}). As one can see, $(\hat{P}_{2})$ is also a mixed-integer nonlinear  optimization problem with an integer variable $N$ and $N$ continuous variables, $d_i$.

In this case, let us remark that the set of feasible values for $N$ is infinite: using the general inequality constraint and the upper bound constraints, the following inequality  should be satisfied
 $$N \geq \lceil \tilde{T}_R/log(d_{max}/\tilde{k}_2 +1) \rceil \stackrel{def}{=} N_{min}.$$
 In fact, only a finite subset has practical interest, i.e. those values that also verify
 $$N \leq  N_{max} \stackrel{def}{=} \lceil \tilde{T}_R/log(d_{min}/\tilde{k}_2 +1) \rceil,$$
  because at least the pair
$(N_{max},\underbrace{d_{min}, \ldots,d_{min}}_{N_{max}})$ is feasible and any other feasible pair with larger value of $N$ will produce a greater value of the objective function $f_2$.
Therefore, to have a non-empty set of feasible points we will assume

\begin{equation}
[N_{min}, N_{max}] \cap \mathbb{N} \neq \emptyset.
\label{H2}
\end{equation}

Now, as in the previous subsection, we will focus on the associated continuous problems generated when the value of the integer variable $N$ is fixed:
\begin{equation}
%\label{hatP2EN}
(\hat{P}_{2}^N) \left\{ \begin{array}{lcc}
             \min \hspace{0.2cm} f_2^N (d) = \displaystyle\sum_{i=1}^{N} d_i, \\
              d=(d_1,\ldots,d_N)  \in \mathbb{R}^{N},\\
              \mbox{subject to   } \  \hat{f}_{1}^N (d) \geq e^{\tilde{T}_R},\\
              \hspace{1.9cm} d_{min} \leq d_i \leq d_{max}, \hspace{0.5cm} i = 1,\hdots,N,\\
             \end{array}
   \right.
  \end{equation}
with the function $\hat{f}_{1}^N$ defined in \eqref{hatP1EN}. In the following result we show that the optimal treatment for $(\hat{P}_{2}^N)$ has also equal doses and moreover, we obtain the explicit formula to determine them:

\begin{theorem}
Let us assume $d_{min}>0$ and
\begin{equation}
N \in [N_{min}, N_{max}-1].
\label{H2b}
\end{equation}
Then, the optimization problem $(\hat{P}_{2}^N)$ has a unique solution $\hat{d}=(\hat{d}_1,\ldots,\hat{d}_N)$ given by
\begin{equation}
 \hat{d}_i= \tilde{k}_{2}(e^{\tilde{T}_R/N}-1), \ \ \ \mbox{for} \ \ i=1, \ldots, N.
%\label{dosisoptp2EN}
\end{equation}
\label{teoremaMaxP2ENbigLambda}
\end{theorem}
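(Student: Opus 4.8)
The plan is to exploit the same arithmetic--geometric mean (AM--GM) inequality that drove the proof of Theorem~\ref{teoremaMaxP1ENbigLambda}, now read in the opposite direction: there a product was maximized under a fixed-sum constraint, whereas here a sum is minimized under a fixed-product constraint. First I would establish a lower bound for the objective $f_2^N$ valid at every feasible point, and then exhibit the proposed $\hat{d}$ as a feasible point attaining it; optimality and uniqueness will then follow from the equality case of AM--GM.

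For the lower bound, let $d$ be any feasible point of $(\hat{P}_2^N)$. Applying AM--GM to the positive numbers $d_i/\tilde{k}_2+1$ gives
\begin{equation*}
\left( \prod_{i=1}^N \left(\frac{d_i}{\tilde{k}_2}+1\right)\right)^{1/N} \le \frac{1}{N}\sum_{i=1}^N\left(\frac{d_i}{\tilde{k}_2}+1\right) = \frac{1}{N\tilde{k}_2}\sum_{i=1}^N d_i + 1 .
\end{equation*}
Since $d$ is feasible, the left-hand side is at least $(e^{\tilde{T}_R})^{1/N}=e^{\tilde{T}_R/N}$, so rearranging yields
\begin{equation*}
\sum_{i=1}^N d_i \ge N\tilde{k}_2\left(e^{\tilde{T}_R/N}-1\right) = \sum_{i=1}^N \hat{d}_i .
\end{equation*}
Hence every feasible point has objective value at least $f_2^N(\hat{d})$, and $\hat{d}$ will be optimal as soon as its feasibility is verified. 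Note that the box constraints are not even needed for this bound; the product constraint and AM--GM suffice.

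The main point --- and the only place where hypothesis \eqref{H2b} is used --- is checking that $\hat{d}$ itself satisfies all the constraints. By construction $\prod_i(\hat{d}_i/\tilde{k}_2+1)=e^{\tilde{T}_R}$, so the nonlinear constraint is active; it remains to verify the box constraints $d_{min}\le \hat{d}_i\le d_{max}$. Taking logarithms (and using that $\log$ is increasing), the upper bound $\hat{d}_i\le d_{max}$ is equivalent to $N\ge \tilde{T}_R/\log(d_{max}/\tilde{k}_2+1)$, which holds since $N\ge N_{min}\ge \tilde{T}_R/\log(d_{max}/\tilde{k}_2+1)$; likewise the lower bound $\hat{d}_i\ge d_{min}$ is equivalent to $N\le \tilde{T}_R/\log(d_{min}/\tilde{k}_2+1)$, which holds since $N\le N_{max}-1<\tilde{T}_R/\log(d_{min}/\tilde{k}_2+1)$, where the strict inequality uses $\lceil x\rceil-1<x$ for every real $x$. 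Thus $\hat{d}$ is feasible and therefore optimal.

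For uniqueness I would revisit the equality case of the bound above. The displayed chain is tight only when both the feasibility inequality holds with equality, i.e.\ $\prod_i(d_i/\tilde{k}_2+1)=e^{\tilde{T}_R}$, and AM--GM holds with equality, i.e.\ all the numbers $d_i/\tilde{k}_2+1$ coincide; together these force $d_i=\tilde{k}_2(e^{\tilde{T}_R/N}-1)$ for every $i$, so $\hat{d}$ is the unique minimizer. The genuinely delicate step is the box-feasibility bookkeeping, which is exactly what \eqref{H2b} is designed to supply; the optimality argument itself is a direct transcription of the AM--GM computation already used for $(\hat{P}_1^N)$.
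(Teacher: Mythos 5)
Your proof is correct and follows essentially the same route as the paper's: apply AM--GM to the numbers $d_i+\tilde{k}_2$ to get the lower bound $\sum_{i=1}^N d_i \geq N\tilde{k}_{2}\bigl(e^{\tilde{T}_R/N}-1\bigr)$ at every feasible point, then check that the proposed $\hat{d}$ is feasible and attains it. The only difference is one of detail: you spell out the box-constraint verification via $N_{min}$ and $N_{max}$ (which the paper dismisses as ``immediate'') and make the equality-case uniqueness argument explicit (which the paper leaves implicit in ``the conclusion follows''), and both of these elaborations are correct.
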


\begin{proof} First of all, it is immediate to check that the proposed dose vector is feasible for the problem $(\hat{P}_{2}^N)$, if  $N$  verifies \eqref{H2b}.
On the other hand, using once more the relation between the geometric and arithmetic means, the doses of any feasible treatment for $(\hat{P}_{2}^N)$ verify the following inequalities
$$\frac{1}{N}\sum_{i=1}^{N}(d_{i}+\tilde{k}_{2}) \geq \left(\tilde{k}_{2}^N \hat{f}_{1}^N (d)\right)^{1/N}\geq \tilde{k}_{2} e^{\tilde{T}_R/N},$$
and consequently,
\begin{equation}
\label{feasibleTildeP2EN}
\sum_{i=1}^{N}d_{i} \geq N\tilde{k}_{2}(e^{\tilde{T}_R/N}-1).
\end{equation}
Taking into account that the lower bound of \eqref{feasibleTildeP2EN} is equal to $f_2^N(\hat{d})$, the conclusion follows.
\end{proof}

Finally, using \eqref{aproxbiglambda}, we determine the optimal solution of the approximate mixed-integer optimization problem of the palliative approach.

\begin{corollary}
\label{coroteoremaMaxP2ENbigLambda}
 Let us assume  that $d_{min}>0$ and (\ref{H2}). Then a global optimal solution of $(\hat{P}_{2})$  problem
 is given by $(\hat{N}, \hat{d})$ with
\begin{itemize}
\item [a)] $\hat{N}= N_{max}-1$  and $\hat{d}_i=d^*$,  for $i=1, \ldots, \hat{N}$, if $(N_{max}-1) d^* \leq N_{max} d_{min}$,
\item [b)] $\hat{N}= N_{max}$  and $\hat{d}_i=d_{min}$, for $i=1, \ldots, \hat{N}$, if $(N_{max}-1) d^* \geq N_{max} d_{min}$,
\end{itemize}
where  $d^*= \tilde{k}_{2}(e^{\tilde{T}_R/(N_{max}-1)}-1)$.
\end{corollary}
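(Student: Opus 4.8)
The plan is to reduce the mixed-integer problem $(\hat{P}_{2})$ to a one-dimensional comparison over the integer variable $N$, mirroring the curative case. For each admissible value of $N$ I would first record the optimal value of the inner continuous problem $(\hat{P}_{2}^N)$. When $N \in [N_{min}, N_{max}-1]$, Theorem \ref{teoremaMaxP2ENbigLambda} applies directly and yields the equal-dose optimum with objective value $g(N) = N\tilde{k}_{2}(e^{\tilde{T}_R/N}-1)$. For $N = N_{max}$ the equal-dose candidate $\tilde{k}_{2}(e^{\tilde{T}_R/N_{max}}-1)$ is no longer admissible for $(\hat{P}_{2}^{N_{max}})$: by the definition of $N_{max}$ through the ceiling one has $N_{max}\log(d_{min}/\tilde{k}_{2}+1) \geq \tilde{T}_R$, so that candidate is $\leq d_{min}$. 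Hence the lower bound $d_i \geq d_{min}$ becomes active, and since $\sum_i d_i \geq N_{max} d_{min}$ for every feasible point while $d=(d_{min},\ldots,d_{min})$ is itself feasible (it satisfies $\hat{f}_{1}\geq e^{\tilde{T}_R}$ by the same inequality raised to the power $N_{max}$), the optimum at $N=N_{max}$ equals $N_{max} d_{min}$. Values $N > N_{max}$ are dominated, as already observed in the discussion preceding (\ref{H2}).

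The decisive step is to prove that $g$ is strictly decreasing, so that over the finite range $[N_{min},N_{max}-1]$ its minimum is attained at the right endpoint $N = N_{max}-1$. To this end I would regard $g(N) = \tilde{k}_2 h(N)$ with $h(N) = N(e^{\tilde{T}_R/N}-1)$ as a function of a real variable $N>0$ and compute $h'(N) = e^{\tilde{T}_R/N} - 1 - (\tilde{T}_R/N)e^{\tilde{T}_R/N}$. Writing $u = \tilde{T}_R/N > 0$, this becomes $h'(N) = e^{u}(1-u) - 1 = p(u)$, and since $p(0)=0$ together with $p'(u) = -u e^{u} < 0$ for $u>0$ gives $p(u) < 0$, I conclude $h'<0$ and $g$ is strictly decreasing on $(0,+\infty)$. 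This is exactly the auxiliary-function device already used in Corollary \ref{coroteoremaMaxP1EN}, and I expect it to be the main obstacle; once monotonicity is settled, the remainder is bookkeeping. It then follows that $\min_{N \in [N_{min},N_{max}-1]} g(N) = g(N_{max}-1) = (N_{max}-1)d^{*}$, where $d^{*} = \tilde{k}_{2}(e^{\tilde{T}_R/(N_{max}-1)}-1)$.

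Finally I would compare the only two surviving candidates: the value $(N_{max}-1)d^{*}$, attained at $N=N_{max}-1$ with equal doses $d^{*}$, and the value $N_{max}d_{min}$, attained at $N=N_{max}$ with doses $d_{min}$. The global optimum of $(\hat{P}_{2})$ is whichever of these is smaller, which produces alternative a) when $(N_{max}-1)d^{*} \leq N_{max}d_{min}$ and alternative b) when the reverse inequality holds; the tie is covered by both, consistent with the statement claiming \emph{a} global optimal solution. The only configuration requiring separate care is the degenerate case $N_{min}=N_{max}$, in which the interval $[N_{min},N_{max}-1]$ is empty and $d^{*}$ would exceed $d_{max}$ (for $N<N_{min}$ the equal dose violates the upper bound, by the definition of $N_{min}$); there only $N=N_{max}$ is admissible and alternative b) is the operative one.
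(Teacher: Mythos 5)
Your proposal is correct and follows essentially the same route as the paper: reduce to the inner problems via Theorem \ref{teoremaMaxP2ENbigLambda}, show that the optimal value $N\tilde{k}_{2}(e^{\tilde{T}_R/N}-1)$ is strictly decreasing in $N$ (your computation with $p(u)=e^{u}(1-u)-1$ is precisely a proof of the paper's unproved assertion that $\varphi_2(x)=x(e^{1/x}-1)$ is strictly decreasing, since $N\tilde{k}_{2}(e^{\tilde{T}_R/N}-1)=\tilde{T}_R\tilde{k}_{2}\varphi_2(N/\tilde{T}_R)$), and then compare the surviving candidate at $N=N_{max}-1$ with $(d_{min},\ldots,d_{min})$ at $N=N_{max}$, dismissing $N>N_{max}$ as dominated. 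You are in fact somewhat more thorough than the paper, which merely asserts that $(d_{min},\ldots,d_{min})$ solves $(\hat{P}_{2}^{N_{max}})$ and does not discuss the degenerate case $N_{min}=N_{max}$, where (as you correctly note) the equal-dose candidate $d^{*}$ exceeds $d_{max}$ and so alternative b) is the only feasible one, even if the inequality in a) happens to hold.
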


\begin{proof}
This can be deduced combining Theorem \ref{teoremaMaxP2ENbigLambda} with the fact that the auxiliary function
$\varphi_2(x)=x(e^{1/x}-1)$  is strictly decreasing in $(0,+\infty)$ and
\begin{equation}
f_2(N,\hat{d}) = N\tilde{k}_{2}(e^{\tilde{T}_R/N}-1) = \tilde{T}_R\tilde{k}_{2}\varphi_2\left(\frac{N}{\tilde{T}_R}\right),
\label{solf2}
\end{equation}
for $ N \leq N_{max}-1$ and using that $(\underbrace{d_{min}, \ldots,d_{min}}_{N_{max}})$ is the solution of $(\hat{P}_{2}^{N_{max}})$.
\end{proof}

\begin{remark}
\begin{itemize}
\item [i)] Let us note that both possibilities presented in Corollary \ref{coroteoremaMaxP2ENbigLambda} can occur in practical situations: for example, it can be observed in Table~\ref{2EdminTMZ}  that option $a)$ is true for $d_{min} \in\{50,100\}$, while the option $b)$ holds for $d_{min} \in \{75,150\}$.
\item [ii)] For the trivial cases we recover the expected solutions. For instance, when $L_{*} \rightarrow \theta^{-}$, we derive that $e^{\tilde{T}_R} \rightarrow 0$ and the nonlinear constraint is always verified and therefore the solution of $(\hat{P}_{2})$ is given by $(\hat{N}, \hat{d}) = (1,d_{min})$, as one can expect from the beginning.
\item [iii)] We can also observe that $N_{max}$ is a decreasing function of $d_{min}$; in fact, $N_{max} \rightarrow + \infty$ when $d_{min} \rightarrow 0$. Moreover, in case $a)$, we have
\[ d_{min} \leq d^* \leq \frac{N_{max}}{N_{max}-1}d_{min}, \]
and $d^* \approx d_{min}$ for $N_{max}$ sufficiently large. From here we deduce that case $a)$ is the prevalent one when $d_{min}$ is small enough.
\item [iv)] Once more, we can now study the dependence of the optimal cumulative dose with respect to $d_{min}$.  Taking (\ref{solf2}) into account and previous remark, we conclude that
\[f_2(\hat{N},\hat{d}) = \tilde{T}_R\tilde{k}_{2}\varphi_2\left(\frac{N_{max}-1}{\tilde{T}_R}\right) \approx \tilde{T}_R\tilde{k}_{2}\varphi_2\left(\frac{1}{\log(d_{min}/\tilde{k}_2 +1)} -\frac{1}{\tilde{T}_R}\right),\]
and to decrease $f_{2}(\hat{N},\hat{d})$ we have to take $d_{min}$ as small as possible. Again, this is in line with the principles of MC.
\end{itemize}
\end{remark}

Table~\ref{2ETMZ} and  the plot at the bottom of Figure~\ref{fig:1E} are devoted to the palliative approach with TMZ. We have considered the threshold $L_{\star}=0.1813\theta$  which is associated to the size of the tumor at the final time $T$ with the best treatment of the curative approach  (see Table~\ref{1ETMZ}). Let us remember that in this case we are considering treatments with a maximum of $40$ doses associated to the 5/28d schedule,  $d_{min}=100$  $mg /m^2$ and the parameter values of Table~\ref{ParameterValues}.  Moreover, Table~\ref{2ETMZ} presents two kind of optimal solutions. For each value of $N$ (number of doses of the treatment) the following three columns are devoted to problems $(\hat{P}_{2}^N)$ (that use \eqref{aproxbiglambda}) and the last four columns correspond to optimal solutions obtained with FMINCON for  problems $(P_2^N)$, setting the value for $N$ and without using that approximation.  Columns with labels $\bar{d}_{min}$ and $\bar{d}_{max}$ show the minimum and maximum doses of the treatment given by FMINCON, respectively. Let us note that both values in each row are almost equal. Moreover, for each value of $N$, the results for the problems $(\hat{P}_{2}^N)$ are good approximations of the results obtained with FMINCON.  Third and seventh columns (with labels $f_2^{N} $ and $(f_2^{N})_{opt} $) present the total dose associated to each optimal solution. We check that the longest treatment is the best from the point of view used in this subsection. Finally, let us note that the total dose of the approximate solution ($5{,}749.24$ $mg /m^2$) is less than the FMINCON optimal value ($5{,}749.95$ $mg /m^2$), but the constraint on the tumor volume is not satisfied (because the tumor size level is slightly exceeded). The plot at the bottom of Figure~\ref{fig:1E} shows the evolution of the tumor volume with three of these treatments computed with FMINCON.

Last but not least, in Table~\ref{2EdminTMZ} and at the bottom of Figure~\ref{fig:1Edmin} we illustrate  the influence of the level $d_{min}$. Now, we compare the optimal treatments corresponding to the palliative approach.
Again, first we have fixed $d_{min}$ and solved $(\hat{P}_{1})$; once we have determined the corresponding solution ($\hat{N}$ and ${\{\hat{d}_i\}}_{i=1}^{\hat{N}}$) we have selected the more appropriated schedules used for TMZ. All the treatments in the table keep the tumor below the given $L_{\star}$ level, and the longest treatment (that also has the lowest doses) is the best because it has the lowest cumulative dose. These results are also consistent with MC. The corresponding tumor volumes, $L(t)/\theta$ , for a medium-sized initial tumor  ($25\%$ of carrying capacity) with the treatments in Table~\ref{2EdminTMZ} are shown in Figure~\ref{fig:1Edmin}.

The computations were performed on a 3.9 GHz Core i5-8265U machine, with 8 GB RAM running under the 64-bit version of Windows 10 and MATLAB R2019b.

\begin{table}
% table caption is above the table
\caption{Results for $(P_{2})$ and TMZ with $L_{\star}= 0.1813\theta$ and $d_{min} = 100$ $mg/m^2$ with schedule 5/28d.}
\label{2ETMZ}
\begin{tabular}{l|ccc||cccc}
\hline\noalign{\smallskip}
 & \multicolumn{3}{|c||}{$(\hat{P}_{2})$} &  \multicolumn{4}{|c}{$(P_{2})$} \\
   \noalign{\smallskip}
\hline\noalign{\smallskip}
    $N$  &  $\hat{d}_i$ & $f_2^{N} $  & $L(T)/\theta$ &  $\bar{d}_{min}$ & $\bar{d}_{max}$ & $(f_2^{N})_{opt} $ & $L(T)/\theta$  \\
\noalign{\smallskip}\hline\noalign{\smallskip}
  $  33 $ & $ 196.18 $ & $ 6{,}473.84 $ & $ 0.18134 $ & $ 196.20957 $ & $ 196.21000 $ & $ 6{,}474.92  $ & $ 0.18130 $
 \\
  $  34 $ & $ 186.60 $ & $ 6{,}344.54 $ & $ 0.18134 $ & $ 186.63372 $ & $ 186.63413  $ & $ 6{,}345.55  $ & $ 0.18130 $
   \\
   $  35 $ & $ 177.87 $ & $ 6{,}225.51 $ & $ 0.18134 $ & $ 177.89874 $ & $ 177.89913 $ & $ 6{,}226.46$ & $ 0.18130 $
 \\
   $  36 $ & $ 169.88 $ & $ 6{,}115.59 $ & $ 0.18134 $ & $ 169.90161 $ & $ 169.90211 $ & $ 6{,}116.46$ & $ 0.18130 $
\\
  $  37 $ & $ 162.54 $ & $ 6{,}013.81 $ & $ 0.18134 $ & $ 162.55750 $ & $ 162.55786  $ & $  6{,}014.63 $ & $ 0.18130$
 \\
  $  38 $ & $ 155.77 $ & $ 5{,}919.30 $ & $ 0.18134 $ & $ 155.79148 $ & $ 155.79182  $ & $ 5{,}920.08$ & $ 0.18130 $
 \\
   $  39 $ & $ 149.52 $ & $ 5{,}831.32 $ & $ 0.18133 $ & $ 14
   9.54009 $ & $ 149.54041  $ & $ 5{,}832.07$ & $ 0.18130 $
 \\
   $  \mathbf{40} $ & $ 143.73 $ & $ 5{,}749.24$ & $ 0.18133 $ & $ \mathbf{143.74862 } $ & $ \mathbf{ 143.74893} $ & $ \mathbf{5{,}749.95} $ & $ 0.18130 $
\\
\noalign{\smallskip}\hline
\end{tabular}
\end{table}

\begin{table}
% table caption is above the table
\caption{Numerical results for $(\hat{P}_{2})$ with   $L_{\star}= 0.1813\theta$ varying $d_{min}$.}
\label{2EdminTMZ}
\begin{tabular}{r|ccc|rcc}
\hline\noalign{\smallskip}
$d_{min}$  &  $N$  &  $\hat{d}_i$ & $f_2^{N} $ & Schedule & $L(T)/\theta$ &  Dose intensity  \\
\noalign{\smallskip}\hline\noalign{\smallskip}
$150$  &  $ 39 $ & $ 150.00 $ & $ 5{,}850.00 $ &  7/14d &  $   0.18047 $ & $ 79.05 $  \\ %8octubre
$100$  &  $ 51 $ & $ 100.25 $ & $ 5{,}112.64 $ & 21/28d &  $   0.18133 $ & $ 78.66 $ \\
$75 $  &  $ 63 $ & $  75.00 $ & $ 4{,}725.00 $ & 21/28d &  $   0.18119 $ & $ 61.36 $ \\
$50 $  &  $ 86 $ & $  50.29 $ & $ 4{,}324.78 $ & 28/28d &  $   0.18132 $ & $ 50.29 $ \\
\noalign{\smallskip}\hline
\end{tabular}
\end{table}

\section{Conclusions}

Throughout this work we have given a mathematical justification supporting metronomic chemotherapy (MC) as the best option for most cytotoxic drugs (i.e. when the main hypothesis \eqref{MH} holds) and for curative and palliative approaches in oncology. First, we have obtained  explicit expressions  of global optimal solutions of $(\hat{P}_1)$ and $(\hat{P}_2)$ problems (given by Corollary~\ref{coroteoremaMaxP1EN} and Corollary~\ref{coroteoremaMaxP2ENbigLambda}) that are approximations of $(P_1)$ and $(P_2)$, respectively. Once more, we emphasize that the administration times  do not appear in the formulation of $(\hat{P}_1)$ and $(\hat{P}_2)$ and for both cases the optimal solutions lead to the longest feasible treatments with equal doses, which are in line with MC. We further deduce mathematically that it is convenient to take the effectiveness level $d_{min}$ as small as possible, but above MED  (i.e. ensuring a minimum effect!). Numerical experiments with TMZ support the high quality of the solutions of $(\hat{P}_1)$ and $(\hat{P}_2)$ with respect to those of $(P_1)$ and $(P_2)$, respectively, and  illustrate the  theoretical results. Moreover, the optimal treatments calculated for both curative and palliative approaches are very similar. Finally, with our method the administration times are adjusted a posteriori (once the number of doses and the individual doses of the treatment are determined).

In our work we have used a simple mathematical model being crucial  the Norton-Simon hypothesis, the Emax model, the PK of the drug  given by \eqref{pCauchy} and the main hypothesis \eqref{MH}.
In the future, we would like to analyze several generalizations and extensions  from a mathematical point of view, including some relevant factors such as drug resistance and anti-angiogenesis effect or the case when  \eqref{MH} fails. We think that both theoretical and numerical results would be very useful for clinical purposes.

\vspace{1cm}

\it{Acknowledgements.
The first and second authors were supported by MCIN/ AEI/10.13039/501100011033/ under research
projects MTM2017-83185-P and PID2020-114837GB-I00.}

\end{document}